\newtheorem{theorem}{Theorem}[section]
\newtheorem{proposition}[theorem]{Proposition}
\newtheorem{lemma}[theorem]{Lemma}
\numberwithin{equation}{section}
\begin{document}

\baselineskip=15.5pt

\title[Non-flat extension of flat vector bundles]{Non-flat extension of flat
vector bundles}

\author[I. Biswas]{Indranil Biswas}

\address{School of Mathematics, Tata Institute of Fundamental
Research, Homi Bhabha Road, Bombay 400005, India}

\email{indranil@math.tifr.res.in}

\author[V. Heu]{Viktoria Heu}

\address{IRMA, UMR 7501, 7 rue Ren\'e-Descartes, 67084 Strasbourg Cedex, France}

\email{heu@math.unistra.fr}

\subjclass[2000]{14H60, 14F05, 53C07}

\thanks{The first author is supported by the J. C. Bose Fellowship. The second author is
 supported by the ANR grants ANR-13-BS01-0001-01 and ANR-13-JS01-0002-01}

\date{}

\begin{abstract} We construct a pair $(E\, ,F)$, where $E$ is a holomorphic vector bundle
over a compact Riemann surface and $F\, \subset\, E$ a holomorphic subbundle, such
that both $F$ and $E/F$ admit holomorphic connections, but $E$ does not.
\end{abstract}


\maketitle

\section{Introduction}

Let $X$ be a compact connected Riemann surface. Let $E$ be a holomorphic vector bundle over $X$. We say that $E$ is \emph{flat} if it can be endowed with a holomorphic connection. 
Such a holomorphic connection is automatically flat (in the usual sense) 
because there are no nonzero $(2\, ,0)$-forms on $X$.
Conversely, given a $C^\infty$ vector bundle $E$ on $X$, a flat connection
on $V$ defines a holomorphic structure on $E$ as well as a holomorphic connection on it. A criterion due to Atiyah and Weil says that a holomorphic vector bundle $E$ on $X$
is flat if and only if for every holomorphic subbundle $0\,\not=\,F\, \subseteq\, E$
such that there is another holomorphic subbundle $F'\, \subset\, E$
with $F\oplus F'\,=\, E$, the degree of $F$ is zero \cite{At}, \cite{We}. In particular,
any semistable vector bundle on $X$ of degree zero admits a holomorphic connection.

Let $E$ be holomorphic vector bundle on $X$ and $$0\, \subset\, F_1 \, \subset\, F_2\, \subset\, \cdots \, \subset\, F_\ell\,=\, E$$
a filtration by holomorphic subbundles of 
$E$. It is natural to ask for conditions that ensure that $E$ admits a holomorphic
connection that preserves this filtration. An obvious necessary condition is that
each successive quotient $F_i/F_{i-1}$, $1\,\leq\, i \,\leq\, \ell$, should admit
a holomorphic connection. One might expect that this necessary condition
is also sufficient. One reason for this expectation is the following: if $E$ is
semistable of degree zero, then indeed $E$ admits a filtration preserving
holomorphic connection by Simpson correspondence \cite[p. 40, Corollary 3.10]{Si}
(see also \cite[p. 1474]{BH}). Note that if $E$ is semistable of degree zero, and
all successive quotients admit holomorphic connections, then each $F_i$ is
semistable of degree zero.

Our aim here is to show that flatness of vector bundles over curves does not
behave well under extensions. More precisely, we
produce a short exact sequence of holomorphic vector bundles
$$
0\,\longrightarrow\, F\,\longrightarrow\, E\,\longrightarrow\,\mathcal{Q}
\,\longrightarrow\, 0
$$
on any compact Riemann surface $X$ of genus $g\geq 2$ such that both $F$ and $\mathcal{Q}$ admit
holomorphic connections but $E$ does not. 

Note that such a vector bundle cannot exist in  in genus $0$ and $1$. Indeed, in that case, all flat vector bundles are semistable of degree 0, and an extension of a semistable vector bundle of degree $0$ by a semistable vector bundle of degree $0$ is again semistable of degree 0.
The vector bundle $E$ we construct is of rank $3$. Note that this also is a minimal condition since a vector bundle $E$ of rank at most two fitting in an exact sequence as above is
automatically semistable of degree zero (and thus flat). 

\section{Construction of $E$}

Let $X$ be a compact connected Riemann surface of genus $g$, with $g\, \geq\,2$. Denote by $\mathrm{K}_X$ the canonical divisor on $X$. The linear equivalence class of $\mathrm{K}_X$ can be expressed as \begin{equation}\label{kan}\mathrm{K}_X\,=\, P+D\, ,\end{equation} where $P$ is a single point and $D$ is an effective divisor of degree $2g-3$ such that $P$ is disjoint from the support of $D$. Indeed, this follows from the fact that $\mathcal{O}(\mathrm{K}_X)$ is globally generated and $$\dim \mathrm{H}^0(X,\mathcal{O}(\mathrm{K}_X))= g\geq 2.$$
 Fix $P$ and $D$ as in (\ref{kan}). 
Let us now split the divisor $D$ in two parts, namely 
\begin{equation}\label{preimage}
D \,=\, D_Q+D_R ,
\end{equation}
where $D_Q$ and $D_R$ are effective divisors with
\begin{equation}\label{preimagedeg}
\deg (D_Q)+1\,=\,\deg (D_R) \,=\, g-1.
\end{equation} By Serre duality, we know that 
\begin{equation}\label{serre}\mathrm{H}^1
(X,\, \mathcal{O}_X(D_Q+D_R)) \simeq \mathrm{H}^0
(X,\, \mathcal{O}_X(P))\simeq \mathbb{C}.\end{equation}
In particular, we can choose a nonzero element (which is actually unique up to multiplication by a scalar) \begin{equation}\label{theta}
\theta \in \mathrm{H}^1
(X,\, \mathcal{O}_X(D_Q+D_R))\setminus \{0\}.\end{equation}
 Since 
${\mathcal O}_X(D_Q+D_R)\,=\, \mathrm{Hom}_{\mathcal{O}_X}({\mathcal O}_X
(-D_R)\, ,{\mathcal O}_X(D_Q))$, the cohomology class $\theta$ in (\ref{theta}) produces a short exact sequence of vector bundles \begin{equation}\label{V}
0\, \longrightarrow\, {\mathcal O}_X(D_Q)\, \longrightarrow\, V\, \longrightarrow\,{\mathcal O}_X (-D_R)\, \longrightarrow\, 0
\end{equation} on $X$. This exact sequence does not split since $\theta \neq 0$.  From
\eqref{preimagedeg} it follows that $\deg(V)\,=\, -1$.
Consider the holomorphic vector bundle
\begin{equation}\label{E}
E\,:=\, {\mathcal O}_X(P) \oplus V
\end{equation}
on $X$ of rank three and degree zero. We have
\begin{equation}\label{subE}
{\mathcal O}_X(P)\oplus {\mathcal O}_X(D_Q)\, \subset\, E
\end{equation}
because ${\mathcal O}_X(D_Q)\, \subset\, V$ (see \eqref{V}). Let
$s^P$ (respectively, $s^Q$) be the holomorphic section of
${\mathcal O}_X(P)$ (respectively, ${\mathcal O}_X(D_Q)$) given by the
constant function $1$ on $X$. So $s^P$ (respectively, $s^Q$) vanishes
over $P$ (respectively the support of $D_Q$) of order one, and
is nonzero everywhere else. Now consider the holomorphic section
\begin{equation}\label{sigma1}
\sigma_1\, :\, {\mathcal O}_X\, \longrightarrow\,
{\mathcal O}_X(P)\oplus {\mathcal O}_X(D_Q)
\end{equation}
defined by $x\, \longmapsto\, (s^P(x)\, ,s^Q(x))$. Note that $\sigma_1$
does not vanish anywhere because $P$ is disjoint from the
support of $D_Q$ according to (\ref{kan}) and (\ref{preimage}). Let $\sigma$ be the composition
\begin{equation}\label{sigma}
{\mathcal O}_X\, \stackrel{\sigma_1}{\longrightarrow}\,
{\mathcal O}_X(P)\oplus {\mathcal O}_X(D_Q)\, \hookrightarrow\, E
\end{equation}
(see \eqref{subE}). Since $\sigma$ is nowhere vanishing, we get a
short exact sequence of holomorphic vector bundles on $X$
\begin{equation}\label{FQ}
0\, \longrightarrow\, F\,:=\, {\mathcal O}_X\, \stackrel{\sigma}{\longrightarrow}\, E
\, \longrightarrow\, \mathcal{Q}\,:=\, E/\sigma(F)\, \longrightarrow\, 0 \,.
\end{equation}

By construction, ${\mathcal O}_X(P)$ is a direct summand of $E$. Since
$\deg({\mathcal O}_X(P))\, \not=\, 0$, from the criterion
of Atiyah--Weil we conclude that $E$ is not flat. We will now prove that both $F$ and
$\mathcal{Q}$ in \eqref{FQ} are flat.

\subsection{Flatness of $F$ and $\mathcal{Q}$}

\begin{lemma}\label{Lem}
The holomorphic vector bundle $\mathcal{Q}$ in \eqref{FQ} is a nontrivial extension of
the line bundle ${\mathcal O}_X(-D_R)$ by ${\mathcal O}_X(P+D_Q)$.
\end{lemma}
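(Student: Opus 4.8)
The plan is to realise $\mathcal{Q}$ through the two-step filtration $\sigma(F)\,\subset\, W\,\subset\, E$, where $W\,:=\,\mathcal{O}_X(P)\oplus\mathcal{O}_X(D_Q)$ is the subbundle of $E$ appearing in \eqref{subE}, and then to identify the resulting extension class with a pushout of the class $\theta$ of \eqref{theta}. For the top quotient, note that $E/W\,\cong\, V/\mathcal{O}_X(D_Q)\,=\,\mathcal{O}_X(-D_R)$ directly from \eqref{E} and \eqref{V}. For the bottom quotient $W/\sigma(F)$, I would write down the morphism $\phi\,:\, W\,\longrightarrow\,\mathcal{O}_X(P+D_Q)$, $(u,v)\,\longmapsto\, u\cdot s^Q-v\cdot s^P$, where the products are taken in $\mathcal{O}_X(P+D_Q)\,=\,\mathcal{O}_X(P)\otimes\mathcal{O}_X(D_Q)$. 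A local computation at $P$, at the support of $D_Q$, and elsewhere shows that $\phi$ is surjective; moreover $\phi\circ\sigma_1\,=\,0$, so $\sigma(F)\,\subseteq\,\ker\phi$, and since both are line subbundles of $W$ of the same degree $0$ (by \eqref{preimagedeg}, $\deg W\,=\,\deg\mathcal{O}_X(P+D_Q)\,=\,g-1$) with one contained in the other, the quotient $\ker\phi/\sigma(F)$ is torsion of length $0$ and they coincide. Hence $W/\sigma(F)\,\cong\,\mathcal{O}_X(P+D_Q)$, and combining the two steps yields the short exact sequence
$$0\,\longrightarrow\,\mathcal{O}_X(P+D_Q)\,\longrightarrow\,\mathcal{Q}\,\longrightarrow\,\mathcal{O}_X(-D_R)\,\longrightarrow\,0,$$
which is the asserted extension.

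For nontriviality, the idea is that $\mathcal{Q}$ is nothing but the pushout of the nonsplit sequence \eqref{V} along multiplication by $s^P$, namely $\mathcal{O}_X(D_Q)\xrightarrow{s^P}\mathcal{O}_X(P+D_Q)$. Concretely I would check that $(u,v)\,\longmapsto\,[(u\cdot s^Q,\,-v)]$ defines a morphism $E\,\longrightarrow\, V'$ to the pushout $V'$ that kills $\sigma(F)$, hence descends to a map $\mathcal{Q}\,\longrightarrow\, V'$, and that this map fits into a morphism of the two extensions inducing the identity on the sub line bundle $\mathcal{O}_X(P+D_Q)$. The five lemma then makes it an isomorphism of vector bundles, identifying the extension class of $\mathcal{Q}$ with that of $V'$ up to a nonzero scalar (the only ambiguity being the automorphism $-1$ of the quotient, which does not affect splitting). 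By functoriality of the extension class under pushout, the class of $\mathcal{Q}$ in $\mathrm{Ext}^1(\mathcal{O}_X(-D_R),\mathcal{O}_X(P+D_Q))\,=\,H^1(X,\mathcal{O}_X(P+D_Q+D_R))$ equals $(s^P)_\ast\theta$.

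It then remains to see that the map
$$(s^P)_\ast\,:\ H^1(X,\mathcal{O}_X(D_Q+D_R))\,\longrightarrow\, H^1(X,\mathcal{O}_X(P+D_Q+D_R))$$
is injective. Since $P+D_Q+D_R\,=\,\mathrm{K}_X$ by \eqref{kan} and \eqref{preimage}, Serre duality identifies $(s^P)_\ast$ with the transpose of the multiplication map $H^0(X,\mathcal{O}_X)\xrightarrow{s^P}H^0(X,\mathcal{O}_X(P))$, $1\,\longmapsto\, s^P$ (using $\mathrm{K}_X-D_Q-D_R\,=\,P$ and $\mathrm{K}_X-P-D_Q-D_R\,=\,0$). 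For $g\,\geq\,1$ one has $h^0(\mathcal{O}_X(P))\,=\,1$, so this is an isomorphism of one-dimensional spaces; hence $(s^P)_\ast$ is an isomorphism and $(s^P)_\ast\theta\,\neq\,0$ because $\theta\,\neq\,0$. Therefore the extension is nonsplit, proving the lemma. I expect the main obstacle to be the bookkeeping in identifying $\mathcal{Q}$ with the pushout $V'$ (getting the signs right and verifying that $\bar\Psi$ is genuinely a morphism of extensions); once that is in place, the nonvanishing is a one-line Serre-duality count.
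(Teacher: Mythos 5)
Your proof is correct and follows essentially the same route as the paper: both realise $\mathcal{Q}$ via the filtration $\sigma(F)\,\subset\,\mathcal{O}_X(P)\oplus\mathcal{O}_X(D_Q)\,\subset\, E$, identify the extension class of \eqref{Q} with the image of $\theta$ under the map $\mathrm{H}^1(X,\mathcal{O}_X(D_Q+D_R))\,\longrightarrow\,\mathrm{H}^1(X,\mathcal{O}_X(P+D_Q+D_R))$ induced by the sheaf inclusion, and conclude from the injectivity of that map. The only cosmetic differences are that you identify the sub- and quotient line bundles by explicit maps rather than by determinant arguments, and that you prove the injectivity by Serre duality (the transpose being the nonzero map $\mathrm{H}^0(X,\mathcal{O}_X)\to\mathrm{H}^0(X,\mathcal{O}_X(P))$ between one-dimensional spaces) whereas the paper uses the long exact sequence of the skyscraper sequence at $P$ together with $h^0(\mathrm{K}_X)-h^0(\mathrm{K}_X-P)=1$; these are dual formulations of the same count.
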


\begin{proof}
On one hand, we have $\bigwedge^2 ({\mathcal O}_X(P)\oplus
{\mathcal O}_X(D_Q)) \,=\, {\mathcal O}_X(P+D_Q)$. On the other hand,
$\bigwedge^2 ({\mathcal O}_X(P)\oplus
{\mathcal O}_X(D_Q)) \,=\, ({\mathcal O}_X(P)\oplus
{\mathcal O}_X(D_Q))/\sigma_1({\mathcal O}_X)$ (see \eqref{sigma1}).
It follows that
\begin{equation}\label{quotientsigma1}
({\mathcal O}_X(P)\oplus
{\mathcal O}_X(D_Q))/\sigma_1({\mathcal O}_X)\,=\, {\mathcal O}_X(P+D_Q)\, .
\end{equation}
The inclusion of ${\mathcal O}_X(P)\oplus {\mathcal O}_X(D_Q)$
in $E$ (see \eqref{sigma}) produces an inclusion of the quotient $({\mathcal O}_X(P)
\oplus {\mathcal O}_X(D_Q))/\sigma_1({\mathcal O}_X)$ in $E/\sigma(F)\,=\,
\mathcal{Q}$ (see \eqref{FQ}). Therefore, from \eqref{quotientsigma1} we have 
\begin{equation}\label{subQ}
{\mathcal O}_X(P+D_Q)\, \subset\,\mathcal{Q}
\end{equation}
as a subbundle. Using \eqref{V}, \eqref{E} we have
$$
\bigwedge\nolimits^2\mathcal{Q}\,=\, \bigwedge\nolimits^3 E\,= {\mathcal O}_X(P)
\otimes \bigwedge\nolimits^2 V\,=\, \,{\mathcal O}_X(P+D_Q-D_R).
$$
Note that its degree is zero \eqref{preimagedeg}. Therefore, from \eqref{subQ},
$$
{\mathcal O}_X(P+D_Q-D_R)\,=\,
\bigwedge\nolimits^2 \mathcal{Q}\,=\, {\mathcal O}_X(P+D_Q)\otimes
(\mathcal{Q}/{\mathcal O}_X (P+D_Q))\, .
$$
So, $\mathcal{Q}/{\mathcal O}_X(P+D_Q)\,=\, {\mathcal O}_X(-D_R)$.
Consequently, from \eqref{subQ}, we get a short exact sequence of vector bundles
\begin{equation}\label{Q}
0\, \longrightarrow\, {\mathcal O}_X(P+D_Q)\,
\longrightarrow\, \mathcal{Q} \, \longrightarrow\,
{\mathcal O}_X(-D_R) \, \longrightarrow\, 0\,.
\end{equation}

To complete the proof of the lemma, we need to show that the short
exact sequence in \eqref{Q} does not split.
Let
\begin{equation}\label{omega}
\omega\, \in\, \mathrm{H}^1(X,\, \mathrm{Hom}({\mathcal O}_X(-D_R)\, ,
{\mathcal O}_X(P+D_Q))
\,=\, \mathrm{H}^1(X,\, {\mathcal O}_X(P+D_Q+D_R))
\end{equation}
be the extension class for the exact sequence in \eqref{Q}. We will now compute
$\omega$.

{}From \eqref{V} and \eqref{E} we have the short exact sequence
$$
0\, \longrightarrow\, {\mathcal O}_X(P)\oplus {\mathcal O}_X(D_Q)\,
\longrightarrow\, E\, \longrightarrow\, {\mathcal O}_X(-D_R)
\, \longrightarrow\, 0
$$
of holomorphic vector bundles on $X$. Let
$$
\begin{array}{rcl}\theta' &\in &\mathrm{H}^1(X,\, ({\mathcal O}_X(P)\oplus
{\mathcal O}_X(D_Q))
\otimes {\mathcal O}_X(D_R))
\vspace{.2cm}\\&&=\, \mathrm{H}^1(X,\,
{\mathcal O}_X(P+D_R))\oplus \mathrm{H}^1(X,\, {\mathcal O}_X(D_Q+D_R))\end{array}
$$
be the cohomology class for this exact sequence. 
Evidently, $\theta'$ coincides with $$(0,\,\theta)\, \in\,
\mathrm{H}^1(X,\,
{\mathcal O}_X(P+D_R))\oplus \mathrm{H}^1(X,\, {\mathcal O}_X(D_Q+D_R))\, ,
$$
where $\theta$ is the class in \eqref{theta}.

Next, consider the homomorphism $\gamma$ defined by the composition
$${\mathcal O}_X(D_Q) \,\hookrightarrow  \,{\mathcal O}_X(P)\oplus {\mathcal O}_X(D_Q)
 \,\twoheadrightarrow \, ({\mathcal O}_X(P)\oplus {\mathcal O}_X(D_Q))/
\sigma_1(F)  \,=\, {\mathcal O}_X(P+D_Q))
$$
(see \eqref{quotientsigma1}), where the homomorphism ${\mathcal O}_X(D_Q)\,
\hookrightarrow\,
{\mathcal O}_X(P)\oplus {\mathcal O}_X(D_Q)$ is the inclusion of the second factor.
Clearly, this composition $\gamma$ coincides with the natural inclusion of the
coherent sheaf ${\mathcal O}_X(D_Q)$ in ${\mathcal O}_X(P+D_Q))$. 
Therefore, the cohomology classes $\omega$ and $\theta$ (constructed in \eqref{omega} 
and \eqref{theta}) satisfy the equation
\begin{equation}\label{rho}
\omega \,=\, \rho(\theta)\, ,
\end{equation}
where
$$
\rho\, :\, \mathrm{H}^1(X,\, {\mathcal O}_X(D_Q+D_R))\, \longrightarrow\, 
\mathrm{H}^1(X,\, {\mathcal O}_X(P+D_Q+D_R))
$$
is the homomorphism induced by the natural inclusion of the coherent sheaf\linebreak
${\mathcal O}_X(D_Q+D_R)$ in ${\mathcal O}_X(P+D_Q+D_R)$.
Consider the short exact sequence of coherent sheaves 
$$
0\,\longrightarrow\, {\mathcal O}_X(D_Q+D_R)\,\longrightarrow\,
{\mathcal O}_X(P+D_Q+D_R)
\,\longrightarrow\,{\mathcal O}_X(P+D_Q+D_R)_P\,\longrightarrow\, 0\, ,
$$
where ${\mathcal O}_X(P+D_Q+D_R)_P$ is the torsion sheaf supported at
$P$ with its stalk being the fiber of the line bundle ${\mathcal O}_X(P+D_Q+D_R)$
over $P$. Let
\begin{equation}\label{homology}
\begin{array}{rl}0\,\longrightarrow&\mathrm{H}^0(X,\, {\mathcal O}_X(D_Q+D_R)) \,
\longrightarrow\,\mathrm{H}^0(X,\, {\mathcal O}_X(P+D_Q+D_R))\vspace{.2cm}\\ \,\stackrel{\alpha_1}{\longrightarrow}&
\, {\mathcal O}_X(P+D_Q+D_R)_P 
\,\stackrel{\alpha_2}{\longrightarrow}\, \mathrm{H}^1(X,\, {\mathcal O}_X(D_Q+D_R))\vspace{.2cm}\\
\,\stackrel{\rho}{\longrightarrow}& \mathrm{H}^1(X,\, {\mathcal O}_X(P+D_Q+D_R))
\end{array}
\end{equation}
be the long exact sequence of cohomologies associated to it.
We have
$$\dim \mathrm{H}^0(X,\, {\mathcal O}_X(P+D_Q+D_R))=\dim \mathrm{H}^0(X,\, {\mathcal O}_X(\mathrm{K}_X))=g$$ and, by Riemann-Roch and (\ref{serre}),
$$
\dim \mathrm{H}^0(X,\, {\mathcal O}_X(D_Q+D_R))=g-1.
$$

These imply that $\alpha_1$ in \eqref{homology} is surjective.
Therefore, $\alpha_2$ in \eqref{homology} is the zero homomorphism. This implies that
$\rho$ in \eqref{homology} is injective.

Since $\rho$ is injective, from \eqref{rho} it follows that $\omega\,\not=\, 0$,
because $\theta\,\not=\, 0$ (see \eqref{theta}). The exact sequence in \eqref{Q}
does not split because $\omega\,\not=\, 0$.
\end{proof}

\begin{proposition}\label{Prop}
The holomorphic vector bundle $\mathcal{Q}$ in \eqref{Q} admits a holomorphic connection.
\end{proposition}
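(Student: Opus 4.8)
The plan is to invoke the Atiyah--Weil criterion quoted in the introduction. First I would record the degree: from \eqref{Q} and \eqref{preimagedeg},
\[
\deg\mathcal{Q}\,=\,\deg\mathcal{O}_X(P+D_Q)+\deg\mathcal{O}_X(-D_R)\,=\,(g-1)-(g-1)\,=\,0,
\]
so $\mathcal{Q}$ is a rank-two bundle of degree zero. I would stress that $\mathcal{Q}$ is \emph{not} semistable, since $\mathcal{O}_X(P+D_Q)\subset\mathcal{Q}$ has positive degree $g-1$; hence the ``semistable of degree zero implies flat'' shortcut mentioned in the introduction is unavailable, and the direct-summand condition of Atiyah--Weil must be verified by hand.

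By that criterion it is enough to show $\mathcal{Q}$ has no direct summand of nonzero degree, and I would aim for the stronger assertion that $\mathcal{Q}$ is \emph{indecomposable}: then its only nonzero direct summand is $\mathcal{Q}$ itself, of degree zero, and flatness follows at once. To prove indecomposability I would argue by contradiction, assuming $\mathcal{Q}\,=\,L_1\oplus L_2$ with $L_1,L_2$ line bundles and, after relabelling (using $\deg L_1+\deg L_2=0$), $\deg L_1\geq 0\geq\deg L_2$. Setting $M\,:=\,\mathcal{O}_X(P+D_Q)$, the composite $M\hookrightarrow\mathcal{Q}\twoheadrightarrow L_2$ is a homomorphism of line bundles with $\deg M=g-1>0\geq\deg L_2$, so it must vanish; thus $M\subseteq L_1$ inside $\mathcal{Q}$. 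Since $\mathcal{Q}/M\cong\mathcal{O}_X(-D_R)$ is torsion-free whereas $L_1/M$ injects into it as a torsion sheaf, one gets $L_1/M=0$, i.e. $M=L_1$. Then projection onto $L_1$ splits the inclusion $M\hookrightarrow\mathcal{Q}$, so the sequence \eqref{Q} splits, contradicting Lemma~\ref{Lem}. Hence $\mathcal{Q}$ is indecomposable.

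The step I expect to be the crux is this last reduction, turning the cohomological non-splitting from Lemma~\ref{Lem} into indecomposability. The delicate point is that $M=\mathcal{O}_X(P+D_Q)$ is a \emph{saturated} subbundle, so that $\mathcal{Q}/M$ is a genuine line bundle and hence torsion-free; this is exactly what forces a putative positive-degree summand to \emph{equal} $M$ rather than merely contain it, and thereby produces the splitting we want to exclude. Everything else, namely the degree count and the reduction to checking direct summands, is routine.
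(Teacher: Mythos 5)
Your proof is correct and follows essentially the same route as the paper: both invoke the Atiyah--Weil criterion and show that any putative decomposition of $\mathcal{Q}$ into line bundles would force the positive-degree subbundle $\mathcal{O}_X(P+D_Q)$ to coincide with one of the summands, splitting the sequence \eqref{Q} and contradicting Lemma~\ref{Lem}. The only (harmless) difference is packaging: you prove indecomposability outright, and justify the identification $\mathcal{O}_X(P+D_Q)=L_1$ via the torsion-free quotient, where the paper argues by contradiction from a summand of nonzero degree and simply notes that two nested line subbundles must coincide.
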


\begin{proof}
Assume that $\mathcal{Q}$ does not admit any holomorphic connection. Since
$\text{degree}(\mathcal{Q})
\,=\, 0$, and $\mathcal{Q}$ does not admit any holomorphic connection, the criterion
of Atiyah--Weil says that $\mathcal{Q}$ holomorphically decomposes as
\begin{equation}\label{decQ}
\mathcal{Q}\, =\, L\oplus M\, ,
\end{equation}
where $\text{degree}(L)\,=\, -\text{degree}(M)\, >\, 0$. Let
$
p_M\, :\, \mathcal{Q}\, \longrightarrow\, M
$
be the projection given by the decomposition in \eqref{decQ}.
Let $\beta$ denote the composition
$$
{\mathcal O}_X(P+D_Q)\, \hookrightarrow\,
\mathcal{Q}\, \stackrel{p_M}{\longrightarrow}\, M\, ,
$$
where the inclusion is constructed in \eqref{subQ}. Since $$\text{degree}({\mathcal
O}_X(P+D_Q))\,=\, g-1>\,0\, >\,
\text{deg}(M)\, ,$$ there is no nonzero homomorphism from
${\mathcal O}_X(P+D_Q)$ to $M$. In particular, $\beta\, =\, 0$. 

We have ${\mathcal O}_X(P+D_Q)\,\subset\, L$ because $\beta\,=\, 0$. Since both
${\mathcal O}_X(P+D_Q)$ and $L$ are line subbundles on $\mathcal{Q}$, this
implies that the two subbundles ${\mathcal O}_X(P+D_Q)$ and $L$ coincide. Hence
$$
M\, =\,\mathcal{Q}/L\,=\,\mathcal{Q}
/{\mathcal O}_X(P+D_Q)\,=\,{\mathcal O}_X(-D_R)
$$
(see Lemma \ref{Lem}).
Therefore, the decomposition $\mathcal{Q}\,=\,L\oplus M$ in \eqref{decQ} produces
a splitting of the short exact sequence in \eqref{Q}. But we know from
Lemma \ref{Lem} that the short exact sequence in \eqref{Q} does not split.
In view of the above contradiction we conclude that $\mathcal{Q}$ admits
a holomorphic connection.
\end{proof}

As we have seen, $E$ is not flat by construction. 
On the other hand, consider the short exact sequence in \eqref{FQ}. The trivial
holomorphic line bundle $F\,=\,{\mathcal O}_X$ admits the trivial holomorphic connection.
The quotient bundle $\mathcal{Q}$ is flat by Proposition \ref{Prop}.
Therefore, we have the following:

\begin{theorem}\label{Thm}
Let $X$ be a compact connected Riemann surface of genus $g\geq 2$. The vector bundle $E$ in \eqref{E}  has a holomorphic subbundle 
such that both the subbundle and the quotient bundle admit holomorphic connections.
But $E$ does not admit a holomorphic connection.
\end{theorem}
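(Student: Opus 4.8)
The plan is to assemble the three required assertions, each of which is essentially in hand from the construction above. The subbundle I have in mind is $F\,=\,\mathcal{O}_X$, embedded in $E$ through the nowhere-vanishing section $\sigma$ of \eqref{sigma}; this yields precisely the short exact sequence \eqref{FQ}, exhibiting $\mathcal{Q}\,=\,E/\sigma(F)$ as the associated quotient bundle. So the subbundle and its quotient are already named, and what remains is to verify that each of them is flat while $E$ is not.

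For the flatness of the subbundle, I observe that $F\,=\,\mathcal{O}_X$ is the trivial holomorphic line bundle, on which the exterior derivative $d$ furnishes the trivial holomorphic connection. For the flatness of the quotient, I would simply invoke Proposition \ref{Prop}, which supplies a holomorphic connection on $\mathcal{Q}$.

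The genuinely load-bearing step is the non-flatness of $E$. Here I would use the decomposition \eqref{E}, namely $E\,=\,\mathcal{O}_X(P)\oplus V$, which presents $\mathcal{O}_X(P)$ as a holomorphic direct summand of $E$. Since $\deg(\mathcal{O}_X(P))\,=\,1\,\neq\,0$, the Atiyah--Weil criterion recalled in the introduction---flatness of $E$ forces every holomorphic direct summand to have degree zero---immediately shows that $E$ carries no holomorphic connection.

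I expect the main obstacle to lie not in this final collation but in the two preparatory results it rests upon: establishing that the extension \eqref{Q} defining $\mathcal{Q}$ is non-split (Lemma \ref{Lem}), and then exploiting this non-splitting, through the Atiyah--Weil dichotomy, to eliminate the only candidate obstruction to flatness of $\mathcal{Q}$ (Proposition \ref{Prop}). Once those are in place, the theorem follows at once by combining the trivial connection on $F$, the connection on $\mathcal{Q}$ produced by Proposition \ref{Prop}, and the degree obstruction coming from the summand $\mathcal{O}_X(P)$ of $E$.
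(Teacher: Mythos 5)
Your proposal is correct and follows exactly the paper's own argument: the subbundle is $F=\mathcal{O}_X$ embedded by $\sigma$ as in \eqref{FQ}, flat via the trivial connection; $\mathcal{Q}$ is flat by Proposition \ref{Prop}; and $E$ is non-flat because $\mathcal{O}_X(P)$ is a direct summand of nonzero degree, contradicting the Atiyah--Weil criterion. Nothing is missing.
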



\end{document}